 \newcommand\sect[2]{{% we make the whole thing an ordinary symbol
  \left.\kern-\nulldelimiterspace % automatically resize the bar with \right
  #1 % the function
  \littletaller % pretend it's a little taller at normal size
  \right|_{#2} % this is the delimiter
  }}
 \newcommand{\littletaller}{\mathchoice{\vphantom{\big|}}{}{}{}}
\newcommand{\nilptwo}{\mathcal{N}_2}
 \newcommand{\sbgrp}[1]{\langle #1\rangle}
 \newcommand{\inner}[2]{\langle #1,#2 \rangle}
\newcommand{\sbmon}[1]{{#1}^*}
\newcommand{\heisen}[1]{H_{#1}(\mathbb{Z})}
 \newcommand{\xx}{\boldsymbol{x}}
 \newcommand{\yy}{\boldsymbol{y}}
 \DeclareMathOperator{\uu}{\boldsymbol{u}}
 \DeclareMathOperator{\vv}{\boldsymbol{v}}
 \DeclareMathOperator{\val}{val}
 \newcommand{\N}{\mathbb{N}}
\newcommand{\Z}{\mathbb{Z}}
\newtheorem{Theorem}{Theorem}
\newtheorem{Corollary}[Theorem]{Corollary}
\newtheorem{Proposition}[Theorem]{Proposition}
\theoremstyle{remark}
\title{Sections of Submonoids of Nilpotent Groups}
\author{Doron Shafrir\\
\small{doron.abc@gmail.com}}
\date{}
\begin{document}

\maketitle
\begin{abstract}
    We show that every product of f.g.\ submonoids of a group $G$ is a section of a f.g.\ submonoid of $G{\times}\heisen{5}$, where $\heisen{5}$ is a Heisenberg group. This gives us a converse of a reduction of Bodart, and a new simple proof of the existence of a submonoid of a nilpotent group of class 2 with undecidable membership problem.
\end{abstract}
\section{Background and Overview}
Romankov showed in \cite{roman1999occurence} that the rational subset membership problem in undecidable in some group $G\in\nilptwo$, where $\nilptwo$ is the class of nilpotent groups of class $2$.  Later, \cite{lohrey2015unitri,konig2016knapsack} showed undecidability in the same class $\nilptwo$ for more specific types of rational sets, such as products of subgroups. More recently,  Romankov showed the existence of an $\nilptwo$ group with a submonoid with undecidable membership problem \cite{roman2023undecidability}. All of the above results rely on the negative solution to Hilbert's 10th problem. There are also some recent positive decidability results in nilpotent groups \cite{bodart2024membership,shafrir2024boundedgen}.
In this work, we show that if $G$ is any f.g.\ group and $A\subset G$ is a product of f.g.\ submonoids of $G$, then $A$ is a section of some submonoid $M$ of $G{\times}\heisen{5}$, that is, for some $h\in\heisen{5}$ we have $g\in A$ iff $(g,h)\in M$. The proof uses simple geometric constructions in the plane. Besides being interesting in its own right, this allows us to reduce the membership problem in $A$ to submonoid membership in $G{\times}\heisen{5}$. Using the existence of products of submonoids of some $G\in\nilptwo$ with undecidable membership problem  \cite{konig2016knapsack}, we get a new, simple proof of the existence of an undecidable submonoid of a group $G\in\nilptwo$.
The method of sections was also used in \cite{shafrir2024virtuallyab} to show that any rational subset of $G$ is a section of a submonoid of $G{\times}H$ where $H$ is virtually Abelian. This paper deals exclusively with sections where the second group $H$ is in $\nilptwo$. We first show that for $H=\heisen{3}$ we can get products of 2 submonoids as sections, then we show that for groups of type $H=\heisen{3}/\sbgrp{z^e}$ we can get products of conjugate submonoids as sections. For arbitrary products of submonoids we need a larger group $H=\heisen{5}$. 
In \cite{bodart2024membership}, Bodart showed that submonoid membership in a nilpotent group $G$ can be reduced to membership in products of submonoids in subgroups $H<G$ with $h([H,H])<h([G,G])$. Our result gives a tight converse to this reduction: it gives a reduction from membership in products of submonoids of $G$ to submonoid membership in a larger group $K=G{\times}\heisen{5}$ satisfying $h([K,K])=h([G,G])+1$.

\subsection{Notations}
We define $\N_0=\{0,1,2...\},\N_+=\{1,2,...\}$. If $G$ is a group and $S\subset G$, $\sbgrp{S}$ and $\sbmon{S}$ are the subgroup and submonoid generated by $S$ respectively. $\nilptwo$ denotes the class of nilpotent groups of class 2. All of our groups and submonoids are finitely generated. If $A\subset G{\times}H$ and $h\in H$, we define $\sect{A}{h}=\{g\in G\mid(g,h)\in A\}$, and call $\sect{A}{h}$ the $h$-section of $A$. We view $G,H$ as subgroups of $G{\times}H$, and therefore $A\cap G=\sect{A}{1_H}$. For any $d\in\N_+$, the discrete Heisenberg group $\heisen{2d+1}$ is the group generated by $x_1,...,x_d,y_1,...,y_d,z$ with the relations $[x_i,y_i]=z$ for all $i$, and all other generators commute. In $\heisen{3}$, we use $x,y,z$ instead of $x_1,y_1,z$. For notational convenience, for any vector $v\in\Z^d$, we define $\xx^v=x_1^{v_1}\cdots x_d^{v_d}$ and similarly for $\yy^v$. From $[x_i,y_j]=z^{\delta_{ij}}$ and bi-linearity of the commutator in $\nilptwo$, we get $[\xx^u,\yy^v]=z^{\inner{u}{v}}$ for any $u,v\in\Z^d$ where $\inner{u}{v}$ is the standard inner product. This allows us to use plane geometry in our calculations in $\heisen{5}$.

\section{Sections with \texorpdfstring{$\heisen{3}$}{H3}}
\subsection{Product of a pair of submonoids}The following Proposition also appears in \cite[Lemma 3.1]{roman2023undecidability}.
\begin{Proposition}    Let $t=x^{-1}z$. Let $w\in\{t,y,x\}^{<\omega}$ be any word (without inverses) in $t,y,x$. Then, $\val(w)=y$ iff $w=t^nyx^n$ for some $n\in\N_0$.
\end{Proposition}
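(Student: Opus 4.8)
The plan is to pass to normal-form coordinates and read a word as a lattice path, so that the group-theoretic statement becomes a claim about heights and signed area in the plane (as the introduction anticipates). Recall that every element of $\heisen{3}$ is written uniquely as $x^a y^b z^c$, and that $z$ is central. The multiplication rule in these coordinates is
\[
x^{a_1}y^{b_1}z^{c_1}\cdot x^{a_2}y^{b_2}z^{c_2}=x^{a_1+a_2}\,y^{b_1+b_2}\,z^{\,c_1+c_2-a_2 b_1},
\]
which I would derive once from $yx=xyz^{-1}$ (equivalently $x^{-1}yx=yz^{-1}$, obtained from $[x,y]=z$). This is the only computation demanding care; everything else is bookkeeping.

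First I would dispatch the easy implication. Using centrality of $z$ we have $t^n=(x^{-1}z)^n=x^{-n}z^n$, and $x^{-n}yx^{n}=yz^{-n}$, so $\val(t^n y x^n)=x^{-n}z^n\,y\,x^n=x^{-n}yx^{n}z^n=yz^{-n}z^n=y$, giving $\val(w)=y$ for all $w=t^n y x^n$. For the converse I would track the partial products of $w$ read left to right: the letter $x$ is the planar step $(+1,0)$, the letter $t$ the step $(-1,0)$, and $y$ the step $(0,+1)$. By the multiplication rule, appending a letter of horizontal displacement $\Delta a$ at current height $b$ and intrinsic $z$-weight $\delta$ (so $\delta=1$ for $t=x^{-1}z$ and $\delta=0$ for $x,y$) changes the $z$-coordinate by $\delta-(\Delta a)\,b$.

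Now suppose $\val(w)=y=x^0y^1z^0$. Projecting to the abelianization $\Z^2$ (forgetting $z$) forces exactly one occurrence of $y$ and equal numbers of $x$'s and $t$'s, say $n$ of each. The unique $y$ splits the word: every horizontal step before it sits at height $b=0$ and contributes nothing to $\sum(\Delta a)b$, while every horizontal step after it sits at height $b=1$. Summing the $z$-increments, the total $z$-exponent equals $n-\sum(\Delta a)b=n-(p-q)$, where $p$ is the number of $x$'s after $y$ and $q$ the number of $t$'s after $y$. Vanishing of the $z$-exponent reads $p-q=n$. The hard part, such as it is, is this final inequality bookkeeping: since $p\le n$ and $q\ge 0$, equality forces $p=n$ and $q=0$, i.e.\ all $n$ of the $x$'s lie after $y$ and all $n$ of the $t$'s before it. As the letters within each block are identical, this pins down $w=t^n y x^n$ uniquely, completing the argument.
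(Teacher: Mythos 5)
Your proof is correct and follows essentially the same route as the paper: project to the abelianization to see that $y$ occurs exactly once and that $x$ and $t$ occur equally often, then compute the $z$-exponent of the word in normal form and force it to vanish. The only difference is bookkeeping: the paper commutes the $t$'s and $x$'s on each side of $y$ and pulls $y$ to the front, so the $z$-exponent appears as the manifestly non-negative sum $b+c$ (number of $x$'s before $y$ plus number of $t$'s after), whereas your left-to-right increment computation yields $n-(p-q)$ and needs the small extra step $p\le n$, $q\ge 0$ to conclude.
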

\begin{proof}
By projecting $\heisen{3}\rightarrow \heisen{3}/\langle x,z\rangle\simeq\Z$ we see that $y$ appears only once in $w$. Using $[t,x]=1$ and $x^{\pm1}y=yx^{\pm1}z^{\pm1}$ we get:
\[\val(w)=(t^ax^b)y(t^cx^d)=yx^{-a}(xz)^b(x^{-1}z)^cx^d=yx^{-a+b-c+d}z^{b+c}\]
Where $a,b,c,d\ge0$. For the value to be $y$, we must have $b=c=0,a=d$ as needed.
\end{proof}
The existence of elements with the above property implies:
\begin{Theorem}
    If $A,B\le G$ are f.g.\ submonoids, there is a f.g.\ submonoid $C\le G{\times}\heisen{3}$ such that $A\cdot B$ is a section of $C$. Therefore, the membership problem in products of 2 submonoids of $G$ can be reduced to submonoid membership in $G{\times}\heisen{3}$.
\end{Theorem}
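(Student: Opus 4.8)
The plan is to use the second coordinate in $\heisen{3}$ as a bookkeeping device that forces every generator coming from $A$ to be multiplied before every generator coming from $B$, so that the $y$-section recovers exactly the ordered product $A\cdot B$ and not the larger submonoid $\sbmon{A\cup B}$, in which generators of $A$ and $B$ may interleave freely. Writing $A=\sbmon{\{a_1,\dots,a_p\}}$ and $B=\sbmon{\{b_1,\dots,b_q\}}$, I would define $C\le G{\times}\heisen{3}$ as the submonoid generated by the tagged generators $(a_i,t)$ for each $i$, the single separator $(1_G,y)$, the tagged generators $(b_j,x)$ for each $j$, together with two ``padding'' generators $(1_G,t)$ and $(1_G,x)$. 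The claim to establish is $\sect{C}{y}=A\cdot B$, which immediately yields the desired reduction: to test $g\in A\cdot B$ one tests $(g,y)\in C$, a submonoid-membership query in $G{\times}\heisen{3}$ whose generating set is computed from those of $A$ and $B$.

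For soundness ($\sect{C}{y}\subseteq A\cdot B$), I would take any product of generators whose value is $(g,y)$ and read off its $\heisen{3}$-component. Since every generator carries one of the letters $t,y,x$ in its second coordinate, this component is the value of a word $w\in\{t,y,x\}^{<\omega}$, and it equals $y$. The Proposition then forces $w=t^n y x^n$: the product consists of $n$ generators tagged $t$, followed by the unique $y$-tagged generator, followed by $n$ generators tagged $x$. The $t$-tagged generators have $G$-component in $\{a_1,\dots,a_p,1_G\}$, so their $G$-product lies in $A$; the $y$-tagged generator contributes $1_G$; and the $x$-tagged generators have $G$-component in $\{b_1,\dots,b_q,1_G\}$, so their $G$-product lies in $B$. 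Hence $g\in A\cdot 1_G\cdot B=A\cdot B$.

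For completeness ($A\cdot B\subseteq\sect{C}{y}$), I would take $g=ab$ with $a=a_{i_1}\cdots a_{i_k}\in A$ and $b=b_{j_1}\cdots b_{j_m}\in B$, and realize $(g,y)$ as a product of generators of the shape $t^{\max(k,m)}\,y\,x^{\max(k,m)}$: list the $k$ generators $(a_{i_\ell},t)$, insert the separator $(1_G,y)$, and list the $m$ generators $(b_{j_\ell},x)$, inserting the padding generator $(1_G,t)$ (if $k<m$) before the separator or $(1_G,x)$ (if $k>m$) after it to equalize the exponents. The $G$-component is unchanged by the padding and equals $ab$, while the $\heisen{3}$-component is $t^n y x^n$ with $n=\max(k,m)$, which evaluates to $y$ by the Proposition.

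The main obstacle is precisely the rigidity of the Proposition's characterization: the words evaluating to $y$ are the \emph{balanced} words $t^n y x^n$, so a naive tagging would only capture ordered products of $A$ and $B$ of equal length, whereas $A\cdot B$ contains products of arbitrary and unequal lengths. The padding generators $(1_G,t)$ and $(1_G,x)$ are what resolve this: they let the shorter side be inflated to match the longer one without altering the $G$-component, exploiting that $1_G\in A$ and $1_G\in B$. The final thing to check is that padding introduces no spurious elements, but this is exactly the soundness argument above, where the padding generators appear merely as extra $t$'s before, or extra $x$'s after, the separator and contribute only $1_G$ to the $G$-coordinate.
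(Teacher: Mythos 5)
Your proposal is correct and follows exactly the route the paper intends: the paper defers the proof to an external reference, but its framing (``the existence of elements with the above property implies'') points to precisely your construction, tagging generators of $A$ with $t$, generators of $B$ with $x$, using $(1_G,y)$ as a separator, and invoking the Proposition that $\val(w)=y$ forces $w=t^nyx^n$. Your padding generators $(1_G,t)$ and $(1_G,x)$ correctly handle the unequal-length issue, mirroring the ``WLOG $1_G\in S_i$'' convention the paper uses in its later, analogous constructions.
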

For a proof see \cite[Proposition 2]{shafrir2024virtuallyab}.

\subsection{Product of conjugate submonoids}
We first need a technical lemma, about the existence of finite sequences in $\heisen{3}/\sbgrp{z^e}$, whose product has a unique value among all words in the sequence elements. We start with a simple claim about vectors with unique sum.
\begin{Proposition}
\label{prop:uniquesum}
    Let $0<b_1<...<b_n$ such that $b_i>ib_{i-1}$ for every $2\le i\le n$, and let  $v_i=(1,b_i)\in\Z^2$. If $\alpha_i\in\N_0$ satisfy $\sum_{i=1}^n \alpha_iv_i=\sum_{i=1}^n v_i$ then $\alpha_1=\alpha_2=\cdots =\alpha_n=1$. 
\end{Proposition}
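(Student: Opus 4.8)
The plan is to read off the two coordinate equations and then peel off the top index by induction. Writing $v_i=(1,b_i)$, the identity $\sum_{i=1}^n\alpha_i v_i=\sum_{i=1}^n v_i$ is equivalent to the pair of scalar equations $\sum_{i=1}^n\alpha_i=n$ (first coordinate) and $\sum_{i=1}^n\alpha_i b_i=\sum_{i=1}^n b_i$ (second coordinate). So I have a ``counting'' constraint fixing the total multiplicity at $n$, and a ``weight'' constraint fixing the weighted sum, and I must show the only nonnegative integer solution is the all-ones vector.

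First I would record the super-increasing consequence $b_j>\sum_{i<j}b_i$ of the hypothesis: since $b_1<\cdots<b_{j-1}$ there are $j-1$ terms, each at most $b_{j-1}$, so $\sum_{i<j}b_i\le(j-1)b_{j-1}<jb_{j-1}<b_j$. The main step is then to prove $\alpha_n=1$. If $\alpha_n\ge 2$, the weight constraint is violated from above: $\sum_{i=1}^n\alpha_i b_i\ge 2b_n+\sum_{i<n}\alpha_i b_i\ge 2b_n>b_n+\sum_{i<n}b_i=\sum_{i=1}^n b_i$, using $b_n>\sum_{i<n}b_i$. If instead $\alpha_n=0$, then the counting constraint forces all $n$ units of multiplicity onto indices $<n$, so $\sum_{i<n}\alpha_i b_i\le b_{n-1}\sum_{i<n}\alpha_i=n b_{n-1}<b_n\le\sum_{i=1}^n b_i$, now using the full strength $b_n>n b_{n-1}$; this violates the weight constraint from below. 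Hence $\alpha_n=1$.

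With $\alpha_n=1$ in hand I would subtract $v_n$ from both sides, reducing to the identical statement for the truncated sequence $b_1<\cdots<b_{n-1}$ (whose hypotheses persist unchanged), and close by induction on $n$, the base case $n=1$ being immediate from the first coordinate. The point I expect to matter most is that the hypothesis $b_i>ib_{i-1}$ is genuinely used in two different guises: the plain super-increasing inequality $b_n>\sum_{i<n}b_i$ kills overshoot ($\alpha_n\ge 2$), whereas the extra factor $i$ --- strictly stronger than super-increasing --- is exactly what rules out undershoot ($\alpha_n=0$), since without it the $n$ available units of multiplicity could conceivably be piled onto $b_{n-1}$ to match the target weight. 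Checking that neither direction can be relaxed is where the constant in the hypothesis earns its place.
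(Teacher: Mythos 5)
Your proof is correct and follows essentially the same route as the paper: induction on $n$, reading off the counting constraint $\sum\alpha_i=n$ from the first coordinate, and ruling out $\alpha_n\ge 2$ (overshoot) and $\alpha_n=0$ (undershoot, where the full strength $b_n>nb_{n-1}$ is needed) before peeling off $v_n$. Your remark distinguishing which case needs only super-increasingness and which needs the extra factor $i$ is a nice gloss, but the argument itself matches the paper's.
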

\begin{proof}
    We use induction on $n$. The first coordinate gives $\sum \alpha_i=n$. If $\alpha_n=0$ then $\sum \alpha_ib_i\le nb_{n-1}<b_n<\sum b_i$ so the second coordinate of $\sum_{i=1}^n \alpha_iv_i$ is smaller than that of $\sum_{i=1}^n v_i$. If $\alpha_n\ge 2$ then $\sum \alpha_ib_i\ge 2b_n>b_n+nb_{n-1}>\sum b_i$ and again the second coordinate doesn't match. Therefore $\alpha_n=1$ and $\sum_{i=1}^{n-1} \alpha_iv_i=\sum_{i=1}^{n-1} v_i$, and we can use induction.
\end{proof}

\begin{Proposition}\label{prop:uniqueprod}
    For any $n$, there is $e\in\N$ and $h_1,\ldots ,h_n\in \heisen{3}/\sbgrp{z^e}$ such that the word $h_1h_2\ldots h_n$ has a unique value among all words in $\{h_1,..,h_n\}$ (without inverses).
\end{Proposition}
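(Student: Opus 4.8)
The plan is to realize the $h_i$ explicitly as $h_i=xy^{b_i}$ in $\heisen{3}/\sbgrp{z^e}$, where the exponents $b_1<\cdots<b_n$ are chosen as in \Cref{prop:uniquesum} and the modulus $e$ is fixed only at the very end. The strategy is a clean separation of two invariants of a word: its image in the abelianization, which records the \emph{multiset} of letters used, and its central ($z$-) coordinate, which records their \emph{order}. Projecting $\heisen{3}/\sbgrp{z^e}$ onto the $(x,y)$-exponents gives a homomorphism to $\Z^2$ sending $h_i$ to $v_i=(1,b_i)$, exactly the vectors of \Cref{prop:uniquesum}, so the first invariant is governed by that proposition and the second by a commutator computation.

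First I would analyze the abelianization. Suppose a word $w=h_{i_1}\cdots h_{i_k}$ has the same value as $h_1\cdots h_n$, and let $\alpha_i$ be the number of occurrences of $h_i$ in $w$. Equating images in $\Z^2$ gives $\sum_i\alpha_i v_i=\sum_i v_i$, so \Cref{prop:uniquesum} forces $\alpha_1=\cdots=\alpha_n=1$. Thus every $h_i$ occurs exactly once, i.e.\ $w=h_{\sigma(1)}\cdots h_{\sigma(n)}$ for some permutation $\sigma\in S_n$, and it remains to show that only $\sigma=\mathrm{id}$ can reproduce the target value.

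Next I would compute the central coordinate. Collecting all $x$'s to the left using $y^{b}x=xy^{b}z^{-b}$ (equivalently $[\xx^u,\yy^v]=z^{\inner{u}{v}}$), the product $h_{\sigma(1)}\cdots h_{\sigma(n)}$ equals $x^{n}y^{\sum_i b_i}z^{E_\sigma}$ with $E_\sigma=-\sum_{j=1}^{n}(n-j)\,b_{\sigma(j)}$, since the $x$ in position $m$ must cross the $y$-blocks in positions $1,\dots,m-1$. The weights $n-j$ are strictly decreasing and the $b_i$ strictly increasing, so by the rearrangement inequality (proved directly by an exchange argument: any inversion can be swapped to strictly decrease $\sum_j(n-j)b_{\sigma(j)}$) this sum is uniquely minimized at $\sigma=\mathrm{id}$. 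Equivalently, $E_{\mathrm{id}}>E_\sigma$ as integers for every $\sigma\neq\mathrm{id}$.

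Finally, the differences $E_{\mathrm{id}}-E_\sigma$ over $\sigma\neq\mathrm{id}$ form a finite set of positive integers, so choosing $e>\max_{\sigma\neq\mathrm{id}}(E_{\mathrm{id}}-E_\sigma)$ guarantees $E_{\mathrm{id}}\not\equiv E_\sigma\pmod{e}$ for all $\sigma\neq\mathrm{id}$. Then in $\heisen{3}/\sbgrp{z^e}$ the only permutation word whose value equals that of $h_1\cdots h_n$ is the identity word itself, which is the desired uniqueness. I expect the main obstacle to be the central-coordinate step: verifying that $\mathrm{id}$ is the \emph{strict} extremizer of $E_\sigma$ despite the vanishing weight $n-j=0$ at the last position (one must check the largest $b$ is forced into that slot), and then ensuring that reduction modulo $e$ introduces no spurious coincidences, which is precisely what the choice of $e$ secures.
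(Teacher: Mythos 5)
Your proof is correct and follows essentially the same route as the paper's: the same choice of generators (up to swapping the roles of $x$ and $y$), the same reduction to permutation words by applying \Cref{prop:uniquesum} to the image in $\heisen{3}/\sbgrp{z}\simeq\Z^2$, and the same extremal argument on the central exponent, since your rearrangement-inequality exchange step is precisely the paper's swap-adjacent-letters sorting argument in different packaging (your bound $\max_{\sigma\neq\mathrm{id}}(E_{\mathrm{id}}-E_\sigma)$ coincides with the paper's $\sum_{i<j}(b_j-b_i)$, attained at the reversal). Both proofs then conclude identically by choosing $e$ strictly larger than this maximal gap.
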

\begin{proof}
    Let $h_i=x^{b_i}y\in \heisen{3}$ where $b_i>ib_{i-1}$ as above. Let $v=h_1\cdot h_2\cdots h_n$ and let $w$ be a word in $h_1,...,h_n$ with value $v$. By applying  \Cref{prop:uniquesum} on the projection of $h_i$ to $\heisen{3}/\sbgrp{z}\simeq\Z^2$, where $\alpha_i$ is the number of appearances of $h_i$ in $w$, we get that each $h_i$ appears exactly once in $w$, so $w=h_{\sigma(1)}\ldots h_{\sigma(n)}$ for some permutation $\sigma\in S_n$. For any $i,j$ we have $[h_i,h_j]=[x^{b_i}y,x^{b_j}y]=z^{b_i-b_j}$ and  $h_ih_j=h_jh_iz^{b_i-b_j}$. Assume that $\sigma\ne Id$. We take $h_n$ and swap it with the next element until it's at the end of $w$, adding a correction factor of $z^{b_n-b_i}$ in each swap, a positive power of $z$. Then swap $h_{n-1}$ with the next letter until $w$ ends with $h_{n-1}h_n$, and so on. Eventually we get $\val(w)=vz^f$ where  $0<f\le\sum_{i<j}(b_j-b_i)$. The maximal value of $f$ is attained for $w=h_n...h_2h_1$. It is now clear that the value $v$ is attained only by the word $h_1\ldots h_n$, and that this uniqueness is maintained in the quotient $\heisen{3}/\sbgrp{z^e}$ if we choose $e>\sum_{i<j}(b_j-b_i)$.
\end{proof}
Geometrically, the exponent $f$ in $z^f=(h_1\cdots h_n)^{-1}(h_{\sigma(1)}\cdots h_{\sigma(n)})$ equals the (signed) area between the paths $0\rightarrow v_1\rightarrow v_1+v_2\rightarrow\cdots\sum v_i$ and $0\rightarrow v_{\sigma(1)}\rightarrow v_{\sigma(1)}+v_{\sigma(2)}\rightarrow\cdots\sum v_i$. It is clear from the definition of $v_i$ that the former path lies below the latter for $\sigma\ne Id$ (with possible partial overlap).

\begin{Proposition}\label{prop:MgMgM}
    If $M\le G$ is a f.g.\ submonoid, $g_1,..,g_n\in G$, then $Mg_1Mg_2\cdots Mg_nM$ is a section of $G{\times}H$ where $H=\heisen{3}/\sbgrp{z^e}$ for some $e\in\N$. 
\end{Proposition}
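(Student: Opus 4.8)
The plan is to realize $Mg_1Mg_2\cdots g_nM$ as a section by using the elements from \Cref{prop:uniqueprod} as \emph{order-enforcing separators} between the $n+1$ copies of $M$, while making the elements of $M$ completely transparent in the $H$-coordinate. First I would fix $e\in\N$ and $h_1,\dots,h_n\in H:=\heisen{3}/\sbgrp{z^e}$ as provided by \Cref{prop:uniqueprod}, so that $h_1h_2\cdots h_n$ is the only word (without inverses) in $\{h_1,\dots,h_n\}$ attaining its value. I then define $C\le G{\times}H$ to be the submonoid generated by the $n$ \emph{separators} $(g_i,h_i)$ for $1\le i\le n$, together with one \emph{block generator} $(m,1_H)$ for each generator $m$ of $M$. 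Since $M$ is f.g.\ and there are finitely many $g_i$, the submonoid $C$ is f.g. I claim that $\sect{C}{h}=Mg_1Mg_2\cdots g_nM$, where $h\in H$ is the value of $h_1h_2\cdots h_n$.

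The observation driving everything is that the $H$-coordinate of any word in the generators of $C$ depends only on the separators it contains: the block generators contribute $1_H$, so the $H$-value is literally the ordered product of the $h_i$ of the separators that occur, read left to right. For the inclusion $\supseteq$ I would take an element $m_0g_1m_1\cdots g_nm_n$ with each $m_j\in M$, express each $m_j$ as a word in the generators of $M$, and interleave these words with the separators $(g_1,h_1),\dots,(g_n,h_n)$ in this order; the resulting word lies in $C$, has $G$-coordinate $m_0g_1m_1\cdots g_nm_n$ and $H$-coordinate $h$. For the reverse inclusion $\subseteq$, I take any $(g,h)\in C$, written as a word $w$ in the generators; its $H$-coordinate is a word in $\{h_1,\dots,h_n\}$ equal to $h$, so by the uniqueness in \Cref{prop:uniqueprod} the separators occurring in $w$ must be exactly $h_1,\dots,h_n$, each appearing once and in this order. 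Reading off the block generators of $w$ between consecutive separators then exhibits $g$ in the form $m_0g_1m_1\cdots g_nm_n$ with every $m_j\in M$, as required.

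The one substantive step is the $\subseteq$ direction, and it rests entirely on the uniqueness of \Cref{prop:uniqueprod}: this is exactly what forbids the separators from appearing out of order, repeated, or missing, and hence what pins down the block decomposition of $w$. Everything else -- finite generation, the $\supseteq$ direction, and the transparency of the block generators -- is routine once the generators are chosen as above, so I expect no genuine obstacle beyond verifying this bookkeeping. Finally, I would note that this matches the ``conjugate submonoid'' viewpoint of the subsection: setting $M_i=(g_1\cdots g_i)M(g_1\cdots g_i)^{-1}$ one has $Mg_1Mg_2\cdots g_nM=M_0M_1\cdots M_n\cdot g_1\cdots g_n$, so the construction presents a product of conjugates of $M$ (up to a fixed right translation) as a section.
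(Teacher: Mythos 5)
Your proof is correct and follows essentially the same route as the paper: the generating set you choose (separators $(g_i,h_i)$ plus transparent block generators $(m,1_H)$) is exactly the paper's set $T=S{\times}\{1_H\}\cup\{(g_i,h_i)\}$, and both arguments hinge on \Cref{prop:uniqueprod} forcing the separators to occur exactly once each and in order. The concluding remark relating the statement to products of conjugate submonoids is a nice observation but not needed; the paper makes the same connection separately in \Cref{thm:conjsubmon}.
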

\begin{proof}
    Let $S\subset G$ be a finite set generating $M$. Let $e\in N$, $h_1,...,h_n\in H=\heisen{3}/\sbgrp{z^e}$ and $v=h_1\cdots h_n$ be as in \Cref{prop:uniqueprod}. Define:
    \[T=S{\times}\{1_H\}\cup\{(g_i,h_i)\mid 1\le i\le n\}\]
    If $(g,v)\in\sbmon{T}$, then by \Cref{prop:uniqueprod} applied on last coordinate, the elements $(g_i,h_i)$ are each used once and in order (but can be far apart). In the first coordinate we get a product of the type $\sbmon{S}g_1\sbmon{S}\cdots g_n\sbmon{S}$ as needed, and it is easy to see any such product is attainable.
\end{proof}

We immediately get:
\begin{Theorem}
\label{thm:conjsubmon}
    Let $G$ be a f.g.\ group, $n\in\N$. The membership problem in products of $n$ conjugate f.g.\ submonoids of $G$ can be reduced to submonoid membership in  $G{\times}H$ where $H=\heisen{3}/\sbgrp{z^e}$ for some $e$ depending on $n$. 
\end{Theorem}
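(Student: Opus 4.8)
The plan is to derive Theorem~\ref{thm:conjsubmon} as an essentially immediate corollary of \Cref{prop:MgMgM}, so the main work is recognizing that a product of conjugate submonoids has exactly the shape $Mg_1Mg_2\cdots g_nM$ handled there. Suppose we are given $n$ conjugate f.g.\ submonoids $M^{a_1},M^{a_2},\dots,M^{a_n}$ of $G$, where $M^{a}=a^{-1}Ma$ and $M$ is a fixed f.g.\ submonoid. Their product is $\prod_{i=1}^n a_i^{-1}Ma_i$. The first thing I would do is rewrite this product by telescoping the conjugating elements: inserting the constants between consecutive copies of $M$ collapses the product into the form $a_1^{-1}M(a_1a_2^{-1})M(a_2a_3^{-1})\cdots(a_{n-1}a_n^{-1})Ma_n$, i.e.\ a product of $n$ copies of $M$ with fixed group elements interleaved between and around them.

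Next I would set $g_0=a_1^{-1}$, $g_i=a_ia_{i+1}^{-1}$ for $1\le i\le n-1$, and $g_n=a_n$, so that the conjugate product becomes $g_0 M g_1 M g_2\cdots g_{n-1} M g_n$. To match the exact template of \Cref{prop:MgMgM}, which produces sections of the form $Mg_1'Mg_2'\cdots g_n'M$ with the constants strictly \emph{between} the $M$'s and copies of $M$ at both ends, I would absorb the two outer constants $g_0$ and $g_n$: left-multiplication by $g_0$ and right-multiplication by $g_n$ of a section is again a section (shifting the first coordinate by a fixed element preserves the section structure in $G\times H$), or equivalently one replaces $M$ at the ends by the one-element constants and applies the construction to $n-1$ interior separators. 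Either bookkeeping route reduces the conjugate product to precisely an expression of the type $Mh_1Mh_2\cdots h_{n-1}M$ (with $n$ copies of $M$ and $n-1$ interleaved constants) handled by \Cref{prop:MgMgM}.

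Finally I would invoke \Cref{prop:MgMgM} to realize this product as a section $\sect{C}{v}$ of a f.g.\ submonoid $C\le G\times H$ with $H=\heisen{3}/\sbgrp{z^e}$ for a suitable $e$ depending on $n$. The reduction of the membership problem is then formal: $g$ lies in the product of conjugate submonoids iff $(g,v)\in C$, and since $C$ is a f.g.\ submonoid of $G\times H$ and $v$ is a fixed, computable element, deciding $g\in\prod M^{a_i}$ reduces to deciding $(g,v)\in C$, which is an instance of submonoid membership in $G\times H$.

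The only real subtlety — and the step I expect to need the most care — is the telescoping bookkeeping at the two ends: making sure the outer conjugators $a_1^{-1}$ and $a_n$ are correctly absorbed so that the rewritten product matches the exact endpoint convention of \Cref{prop:MgMgM} (copies of $M$ at both extremities, constants strictly interior), and confirming that prepending and appending fixed group elements to a section still yields a section of a f.g.\ submonoid. All of this is routine, so the theorem follows at once from \Cref{prop:MgMgM}.
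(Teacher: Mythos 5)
Your proposal is correct and takes essentially the same route as the paper: telescope the conjugators so that $M^{a_1}\cdots M^{a_n}=a_1^{-1}\,M(a_1a_2^{-1})M(a_2a_3^{-1})\cdots(a_{n-1}a_n^{-1})M\,a_n$, apply \Cref{prop:MgMgM} to the inner product with its $n-1$ interior separators, and deal with the two outer constants. The one imprecision is your claim that left/right multiplication of a section by fixed elements ``is again a section'': a translate of a f.g.\ submonoid is not a submonoid, and it is neither justified nor needed that the translated set is a section of some other f.g.\ submonoid; the correct (and sufficient) bookkeeping, which is exactly what the paper does, is to translate the \emph{query element} instead, so your final equivalence should read $g\in M^{a_1}\cdots M^{a_n}$ iff $(a_1ga_n^{-1},v)\in C$ rather than $(g,v)\in C$.
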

\begin{proof}
We have  $g\in M^{g_1}M^{g_2}\cdots M^{g_n}$ iff $g_1gg_n^{-1}\in Mg_1g_2^{-1}Mg_2g_3^{-1}M\cdots g_{n-1}g_n^{-1}M$ iff $(g_1gg_n^{-1},v)\in\sbmon{T}$ for $T,v$ we get by applying \Cref{prop:MgMgM}.
\end{proof}
We give a simple example of an application of \Cref{thm:conjsubmon}:
\begin{Corollary}
    The membership problem in products of conjugate f.g.\ submonoids in nilpotent groups $G$ with $h([G,G])\le 2$ is decidable, where $h(\cdot)$ is the Hirsch length.
\end{Corollary}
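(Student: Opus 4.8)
The plan is to combine Theorem~\ref{thm:conjsubmon} with a decidability result for submonoid membership in a suitably nice group. By Theorem~\ref{thm:conjsubmon}, membership in a product $M^{g_1}\cdots M^{g_n}$ of conjugate submonoids of $G$ reduces to submonoid membership in the group $K=G\times H$, where $H=\heisen{3}/\sbgrp{z^e}$. So it suffices to show that submonoid membership is decidable in every such $K$. The key observation is that the Hirsch length of the commutator subgroup is controlled: since $H=\heisen{3}/\sbgrp{z^e}$ has a \emph{finite} commutator subgroup (the image of $\sbgrp{z}$ is the cyclic group of order $e$), we have $h([H,H])=0$, and hence $h([K,K])=h([G{\times}H,G{\times}H])=h([G,G])+h([H,H])\le 2+0=2$.

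First I would make precise that $K=G\times H$ is finitely generated and nilpotent (a direct product of nilpotent groups is nilpotent), so it falls within the scope of whatever submonoid-membership decidability theorem we invoke. Second, I would verify the Hirsch-length bound $h([K,K])\le 2$ as above, using that the Hirsch length is additive over direct products and that commutators in a direct product decompose coordinatewise. Third, I would appeal to a decidability result for submonoid membership in nilpotent groups $K$ with $h([K,K])\le 2$ --- presumably one of the positive results cited in the introduction (e.g.\ \cite{bodart2024membership}) --- to conclude that submonoid membership in $K$ is decidable. Chaining the reduction of Theorem~\ref{thm:conjsubmon} with this decidability gives decidability of membership in products of conjugate submonoids of $G$.

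The main obstacle I anticipate is ensuring that the hypothesis $h([G,G])\le 2$ of the corollary lines up exactly with the input hypothesis of the invoked decidability theorem \emph{after} passing to $K$. Two points need care: one must confirm that adjoining the finite-by-abelian factor $H$ does not increase $h([K,K])$ beyond the decidable range --- which is precisely why the construction uses the quotient $\heisen{3}/\sbgrp{z^e}$ with its torsion center rather than $\heisen{3}$ itself, whose center is infinite and would contribute $h([H,H])=1$. One should also check that the relevant decidability theorem applies to products of submonoids (or is already phrased for submonoid membership, which is what the reduction targets) in the class of f.g.\ nilpotent groups with $h([\cdot,\cdot])\le 2$, and that its hypotheses are met by $K=G\times\heisen{3}/\sbgrp{z^e}$; once the Hirsch-length bookkeeping is confirmed, the remainder is a direct invocation.
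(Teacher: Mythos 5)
Your proposal is correct and takes essentially the same route as the paper: reduce via \Cref{thm:conjsubmon} to submonoid membership in $K=G{\times}(\heisen{3}/\sbgrp{z^e})$, observe that $h([K,K])=h([G,G])\le 2$ because the second factor has finite commutator subgroup, and invoke the known decidability result for nilpotent groups with $h([\cdot,\cdot])\le 2$. The one citation to fix: that decidability result is \cite{shafrir2024boundedgen}, not \cite{bodart2024membership} --- the latter only provides the reduction to products of submonoids in smaller subgroups, on which \cite{shafrir2024boundedgen} builds.
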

\begin{proof}
   We have to decide  $g\in M^{g_0}M^{g_1}\cdots M^{g_n}$, and by \Cref{thm:conjsubmon} we can reduce this problem to submonoid membership in $K=G{\times}(\heisen{3}/\sbgrp{z^e})$. Since $h([K,K])=h([G,G])\le 2$, submonoid membership is decidable in $K$ by \cite{shafrir2024boundedgen}.
\end{proof}
The proof of \cite{shafrir2024boundedgen} relies on a reduction \cite{bodart2024membership} from submonoid membership in a nilpotent f.g.\ group $G$ to membership in products of submonoids of a subgroup $H$ with $h([H,H])<h([G,G])$. It may seem artificial  to pass through the larger group $G{\times}(\heisen{3}/\sbgrp{z^e})$ before going to a subgroup, and indeed, it is straightforward to generalize Bodart's reduction to deal with products of conjugate submonoids directly. The use of sections is more natural for negative results, as we see next.

\section{Arbitrary products of submonoids as sections}
We now prove that every product of f.g.\ submonoids of a group $G$ is a section of a f.g.\ submonoid of $G{\times}H$, where $H\in\nilptwo$. We give 2 proofs. The first proof is with $H=\heisen{5}{\times}(\heisen{3}/\sbgrp{z^e})$. It relies on \Cref{thm:conjsubmon}, and is conceptually simpler. The second proof has $H=\heisen{5}$, so it is stronger, but it is a bit harder. Since $h([H,H])=1$ in both cases, both are enough for the main theorems in the next section.

\subsection{First proof using conjugate submonoids}

\begin{Theorem}
\label{thm:arbitraryprod2conjprod}
    Let $M_1,...,M_n$ be f.g.\ submonoids of $G$. Then, there is a f.g.\ submonoid $M$ of $G{\times}\heisen{5}$ and elements $h_1,...,h_n\in\heisen{5}$ such that $(M^{h_1}M^{h_2}\cdots M^{h_n})\cap G=M_1\cdots M_n$.
\end{Theorem}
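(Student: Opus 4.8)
The plan is to build a single monoid $M\le G{\times}\heisen{5}$ by taking, for each $M_a$ with finite generating set $S_a$, the tagged generators $(s,t_a)$ with $s\in S_a$ and a suitable tag $t_a\in\heisen{5}$, together with a few auxiliary generators of the form $(1_G,\ast)$, and to choose the conjugators $h_i\in\heisen{5}$ so that $M^{h_i}$ ``selects'' $M_i$. The guiding observation is that, since $\heisen{5}\in\nilptwo$, conjugation by $h_i$ changes a generator only by a central factor: $(s,t_a)^{h_i}=(s,t_a[t_a,h_i])=(s,t_az^{c_{a,i}})$, where $c_{a,i}$ is the symplectic pairing of the tag $t_a$ with $h_i$. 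Consequently, in a product taken from $M^{h_1}\cdots M^{h_n}$ the non-central ($\xx,\yy$) part of the $\heisen{5}$-coordinate depends only on the multiset of generators used, while the central ($z$) part records the slot assignment through the pairings $c_{a,i}$ and the enclosed area of the path traced by the tags.

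I would then choose the tags $t_a$ and the conjugators $h_i$ by a plane-geometry construction, reusing the two engines already developed: a counting/linear argument in the spirit of \Cref{prop:uniquesum} and an area argument as in \Cref{prop:uniqueprod} and the remark following it. The aim is to arrange that an element of the product has trivial $\heisen{5}$-coordinate precisely when, in each slot $i$, only generators of $S_i$ are used (and the auxiliary generators close up the traced path). Granting this, the two directions of $(M^{h_1}\cdots M^{h_n})\cap G=M_1\cdots M_n$ are immediate: soundness because an element of the intersection with $G$ has trivial $\heisen{5}$-coordinate, hence pure slots, hence a $G$-coordinate in $M_1\cdots M_n$; completeness because any $g_1\cdots g_n$ with $g_i\in M_i$ is realized by feeding $S_i$-generators into slot $i$ and using the auxiliary generators to return the traced path to the origin at zero signed area.

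The hard part is reconciling completeness with soundness. Completeness forces the non-central part of the coordinate to be cancellable, so the tags must admit ``return'' moves; but such return moves threaten soundness, since a wrong-submonoid generator could be smuggled into slot $i$ and have its linear part cancelled by a companion move. The point of using $\heisen{5}$ rather than $\heisen{3}$ is exactly to defeat this: one symplectic pair $(x_1,y_1)$ carries the bookkeeping that lets correct routings close up at zero area, while the second pair $(x_2,y_2)$ supplies, via the $c_{a,i}$, a penalty for a mismatch $a\ne i$ whose sign a bookkeeping move cannot undo. Making this penalty strictly one-signed and zero exactly on the diagonal—so that the total $z$-exponent is a sum of non-negative terms vanishing iff every slot is pure—is the crux, and is where I expect the careful choice of the vectors defining $t_a$ and $h_i$ (a rapidly increasing sequence as in \Cref{prop:uniquesum} together with a convexity/area estimate) to do the work. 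Finally, since $h([\heisen{5},\heisen{5}])=1$, this realization of $M_1\cdots M_n$ as a product of conjugates of the single monoid $M$ can be combined with \Cref{thm:conjsubmon} to obtain the submonoid-membership reduction advertised in the overview.
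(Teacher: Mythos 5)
Your outline has the right skeleton --- tagged generators $(s,t_a)$, conjugators that act only by central factors, and a penalty matrix $c_{a,i}$ that should be one-signed and vanish exactly on the diagonal --- but it stops precisely where the proof actually lives. You yourself flag that arranging the penalties to be non-negative and zero exactly when $a=i$ ``is the crux'' and that you ``expect'' a careful choice of vectors to achieve it; the paper's proof \emph{is} essentially that choice plus two short computations, so leaving it unspecified is a genuine gap, not a deferrable detail. Concretely, the paper takes $u_1,\dots,u_n\in\Z^2$ to be the vertices of a convex polygon with $\sum_i u_i=(0,0)$, takes $v_i$ to be a direction whose maximum over the polygon is attained only at $u_i$ (suitably normalized), and sets $t_a=\uu_a=\xx^{u_a}z^{\inner{u_a}{v_a}}$ and $h_i=\yy^{-v_i}$. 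Then $\uu_a^{h_j}=\xx^{u_a}z^{\inner{u_a}{v_a}-\inner{u_a}{v_j}}$, and the exponent is non-negative and zero iff $a=j$; nothing beyond this is needed.

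The second gap is structural, and explains why your sketch feels harder than the theorem is: every difficulty you anticipate --- auxiliary generators $(1_G,\ast)$, ``return moves,'' area bookkeeping in the spirit of \Cref{prop:uniqueprod}, and the smuggling scenario where a wrong generator's linear part is cancelled by a companion --- dissolves once all tags are placed in the abelian subgroup $\sbgrp{x_1,x_2,z}\simeq\Z^3$ and all conjugators in $\sbgrp{y_1,y_2}$. With that arrangement the tags and their conjugates commute with one another, so the $\heisen{5}$-coordinate of any element of $M^{h_1}\cdots M^{h_n}$ is computed in $\Z^3$, independently of order: there is no area term, and neither \Cref{prop:uniquesum} nor any word-uniqueness argument is needed here. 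Soundness becomes one line: every generator contributes a non-negative $z$-exponent, so the total is zero iff each is zero, i.e.\ iff every slot is pure; no companion move can cancel a positive penalty because nothing ever contributes a negative one. Completeness needs no return moves: since $\sum_i u_i=0$, padding each $g_i$ to a common length $l$ with $1_G\in S_i$ makes the second coordinate $\xx^{l\sum_i u_i}=1$. The tension you describe between completeness and soundness is a real obstruction for tags with nontrivial $\yy$-parts, and the fact that you are fighting it indicates the configuration you have in mind is not the one that works. One caution if you do flesh out the construction: what the argument needs is the row condition $\inner{u_a}{v_j}\le\inner{u_a}{v_a}$ (for fixed $a$, maximized at $j=a$), which is not the same as the supporting-line (column) condition and is not invariant under rescaling individual $v_j$'s; the symmetric choice $u_k=v_k$ on a circle, mentioned in the paper, satisfies both.
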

\begin{proof}
    We start with a simple geometric construction. Let $u_i\in\Z^2$, $1\le i\le n$, be the vertices of a convex polygon  such that $\sum_iu_i=(0,0)$. Let $v_i\in\Z^2$ be such that the maximum value of $\inner{x}{v_i}$ over the polygon is attained on the single point $x=u_i$. Geometrically, $v_i$ can be chosen perpendicular to any line whose intersection with the polygon is $\{u_i\}$. Define: $\uu_i=\xx^{u_i}z^{\inner{u_i}{v_i}},h_i=(1_G,\yy^{-v_i})$. Observe that 
    \[\uu_i^{\yy^{-v_j}}=\xx^{u_i}z^{\inner{u_i}{v_i}}[\xx^{u_i},\yy^{-v_j}]=\xx^{u_i}z^{\inner{u_i}{v_i}-\inner{u_i}{v_j}}\]
    By choice of $v_i$, we have $\inner{u_i}{v_i}\ge\inner{u_i}{v_j}$ with equality iff $i=j$, and therefore the exponent of $z$ is non-negative, and $0$ iff $i=j$.
 
 Let $S_i$ be a finite generating set for $M_i$, where WLOG $1_G\in S_i$. Define $T=\bigcup_iS_i\times\{\uu_i\}$ and $M=\sbmon{T}$.  We claim that $(M^{h_1}M^{h_2}\cdots M^{h_n})\cap G=M_1\cdots M_n$. For one side, let $g_i\in M_i$, we need to show $(g_1\cdots g_n,1)\in M$. We can express $g_i=s_{i,1}s_{i,2}\cdots s_{i,l_i}$ where $l_i\in\N,s_{i,j}\in S_i$. Set $l=\max_il_i$. Since $1_G\in S_i$, we may assume $l=l_1\cdots =l_n$ by padding the shorter sequences with $1_G$. We have $(s_{i,j},\uu_i)^{h_i}=(s_{i,j},\xx^{u_i})\in T^{h_i}$, and by taking product over $j$ we get $(g_i,\xx^{lu_i})\in M^{h_i}$.  By taking product now over $i$ we get
 \[(g_1\cdots g_n,\xx^{lu_1}\cdots\xx^{lu_n})\in M^{h_1}\cdots M^{h_n}\]
but the second coordinate is $\xx^{lu_1}\cdots\xx^{lu_n}=\xx^{l\sum u_i}=1$ since $\sum_iu_i=(0,0)$.
Conversely, assume $(g,1)\in M^{h_1}M^{h_2}\cdots M^{h_n}$. We note that in the second coordinate we have values of type $\uu_i^{\yy^{-v_j}}=\xx^{u_i}z^e$ where $e\ge0$ always holds, and $e=0$ iff $i=j$. Now, $\sbgrp{x_1,x_2,z}\simeq\Z^3$, and for the product to be the identity, we must always have $e=0$ in the exponent of $z$, meaning the from each $M^{h_i}$ only $S_i\times\{\uu_i\}$ is used. This implies that in the first coordinate $S_i$ are used in order, so $g\in M_1\cdots M_n$.
\end{proof}
A natural choice can be $u_k=v_k\simeq(N\cos(2\pi k/n),N\sin(2\pi k/n))$ for a large $N$ while making sure $\sum u_k=(0,0)$. We may also relax the requirement $\sum u_k=(0,0)$ and only require that $(0,0)$ is in the interior of the polygon, which implies $\sum\alpha_iu_i=0$ for some $\alpha_i\in\N_+$, and then the proof works by padding $g_{i,j}$ so that $l_i=c\alpha_i$ for some $c$. We can visualise $\sbgrp{x_1,x_2,z}=\{\xx^uz^e\mid u\in\Z^2,e\in\Z\}\simeq\Z^3$  as 3 dimensional space, where $e$ is the height, and $e=0$  is the floor. The vectors $\uu_i$ can then be seen as an inverted pyramid standing on its vertex (or as the lower part of a spinning dreidel), and each conjugation tilts the pyramid so that a single edge touches the ground (this "tilt" is linear but not orthogonal). For the sum of such conjugated vectors to be $(0,0)$ they must all touch the ground, so in each conjugate $M^{h_i}$ only one edge is active, allowing the use of $S_i$. We now easily get:
\begin{Theorem}
    Any product of f.g.\ submonoids of $G$ is a section of a submonoid of $G{\times}\heisen{5}{\times}(\heisen{3}/\sbgrp{z^e})$
\end{Theorem}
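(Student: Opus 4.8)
The plan is to combine the two previous theorems. \Cref{thm:arbitraryprod2conjprod} reduces an arbitrary product $M_1\cdots M_n$ of f.g.\ submonoids of $G$ to a \emph{single} product of conjugates $M^{h_1}\cdots M^{h_n}$ of one f.g.\ submonoid $M \le G{\times}\heisen{5}$, realized as the $1_{\heisen{5}}$-section of that conjugate product. \Cref{thm:conjsubmon} in turn reduces membership in any product of $n$ conjugate submonoids of a group $K$ to submonoid membership in $K{\times}(\heisen{3}/\sbgrp{z^e})$ by exhibiting the conjugate product as a section. So I would simply apply the second reduction to the group $K=G{\times}\heisen{5}$ and the conjugate product $M^{h_1}\cdots M^{h_n}$ produced by the first reduction.

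Concretely, first I would invoke \Cref{thm:arbitraryprod2conjprod} to obtain a f.g.\ submonoid $M\le G{\times}\heisen{5}$ and conjugating elements $h_1,\dots,h_n\in\heisen{5}$ with $(M^{h_1}\cdots M^{h_n})\cap G=M_1\cdots M_n$. Next I would apply \Cref{thm:conjsubmon} (or, more precisely, the construction of \Cref{prop:MgMgM} underlying it) to the group $K=G{\times}\heisen{5}$ and the conjugate product $M^{h_1}\cdots M^{h_n}$. This gives an integer $e$, a f.g.\ submonoid $N\le K{\times}(\heisen{3}/\sbgrp{z^e})=G{\times}\heisen{5}{\times}(\heisen{3}/\sbgrp{z^e})$, and an element $v\in\heisen{3}/\sbgrp{z^e}$ such that for $k\in K$ we have $k\in M^{h_1}\cdots M^{h_n}$ iff $(k,v)\in N$; that is, $M^{h_1}\cdots M^{h_n}=\sect{N}{v}$.

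Finally I would chase the sections through the composition. For $g\in G\subset K$ we have $g\in M_1\cdots M_n$ iff $(g,1_{\heisen{5}})\in M^{h_1}\cdots M^{h_n}$ iff $(g,1_{\heisen{5}},v)\in N$. Hence $M_1\cdots M_n$ is exactly the $(1_{\heisen{5}},v)$-section of the submonoid $N$ of $G{\times}\heisen{5}{\times}(\heisen{3}/\sbgrp{z^e})$, which is what the statement asserts (with $(1_{\heisen{5}},v)$ as the witnessing element of the second factor $\heisen{5}{\times}(\heisen{3}/\sbgrp{z^e})$).

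I expect this to be essentially a routine composition of two already-established reductions, so there is no single hard step; the only thing to be careful about is the bookkeeping of which factor each section coordinate lives in, and the observation that a section of a section is again a section. Specifically, if $A\le G{\times}H_1{\times}H_2$, then $\sect{\left(\sect{A}{h_2}\right)}{h_1}=\sect{A}{(h_1,h_2)}$ where on the left the inner section is taken in the $H_2$-coordinate (yielding a subset of $G{\times}H_1$) and the outer in the $H_1$-coordinate; this elementary identity is all that is needed to glue the two reductions together.
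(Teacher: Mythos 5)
Your high-level strategy is the same as the paper's: feed the conjugate product supplied by \Cref{thm:arbitraryprod2conjprod} into the construction of \Cref{prop:MgMgM}, and compose sections via the (correct) identity $\sect{(\sect{A}{h_2})}{h_1}=\sect{A}{(h_1,h_2)}$. However, your middle step is false as stated. \Cref{prop:MgMgM} only realizes as sections products of the form $Mg_1Mg_2\cdots g_{n-1}M$, i.e.\ products \emph{beginning and ending} with a full copy of $M$, whereas
\[M^{h_1}\cdots M^{h_n}=h_1^{-1}\,Mh_1h_2^{-1}Mh_2h_3^{-1}\cdots h_{n-1}h_n^{-1}M\,h_n\]
carries the extra factors $h_1^{-1}$ and $h_n$ on the outside. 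Accordingly, what \Cref{thm:conjsubmon} actually proves is that $k\in M^{h_1}\cdots M^{h_n}$ iff $(h_1kh_n^{-1},v)\in N$, where $\sect{N}{v}$ equals the two-sided translate $A\doteqdot Mh_1h_2^{-1}M\cdots h_{n-1}h_n^{-1}M=h_1(M^{h_1}\cdots M^{h_n})h_n^{-1}$ --- not the conjugate product itself. The discrepancy is not cosmetic. Take $M$ and $h_i=(1_G,\yy^{-v_i})$ as constructed in \Cref{thm:arbitraryprod2conjprod} (here $v_i\in\Z^2$ are the vectors from that proof, not the section point $v$), and project the $\heisen{5}$-coordinate to $\heisen{5}/\sbgrp{x_1,x_2,z}\simeq\Z^2$. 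Every element of $M$ projects to $0$, since the generators of $M$ have $\heisen{5}$-coordinate $\uu_i\in\sbgrp{x_1,x_2,z}$, while each interleaved factor $h_ih_{i+1}^{-1}$ projects to $v_{i+1}-v_i$; hence every element of $A$ projects to $v_n-v_1\neq 0$. Consequently $A\cap G=\emptyset$, so your proposed section $\sect{N}{(1_{\heisen{5}},v)}=A\cap G$ is \emph{empty}, not $M_1\cdots M_n$.

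The missing (small but essential) idea is to absorb the outer translation into the section point, which works precisely because $h_1,h_n$ lie in the direct factor $\heisen{5}$ and therefore commute with every $g\in G$: one has $h_1gh_n^{-1}=g\cdot h_1h_n^{-1}$, so $g\in M_1\cdots M_n$ iff $(g,1_{\heisen{5}})\in M^{h_1}\cdots M^{h_n}$ iff $(g,h_1h_n^{-1})\in A$ iff $(g,h_1h_n^{-1},v)\in N$. The correct conclusion is therefore $M_1\cdots M_n=\sect{N}{(h_1h_n^{-1},v)}$, with section point $(h_1h_n^{-1},v)$ rather than $(1_{\heisen{5}},v)$. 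With this correction, your argument coincides with the paper's proof.
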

\begin{proof}
    Given such product $M_1\cdots M_n$ in $G$, by \Cref{thm:arbitraryprod2conjprod} there is a f.g.\ submonoid $M\le G{\times}\heisen{5}$ and $h_1,...,h_n\in\heisen{5}$ such that $M^{h_1}\cdots M^{h_n}\cap G=M$. We get that for every $g\in G$, $g\in M_1\cdots M_n$ iff $(g,h_1h_n^{-1})\in A\doteqdot Mh_1h_2^{-1}M\cdots h_{n-1}h_n^{-1}M$, and therefore $M_1\cdots M_n=\sect{A}{h_1h_n^{-1}}$. By \Cref{prop:MgMgM}, there is a f.g.\ submonoid $N\le G{\times}\heisen{5}{\times}(\heisen{3}/\sbgrp{z^e})$ and $v\in \heisen{3}/\sbgrp{z^e}$ such that $\sect{N}{v}=A$, and together we get $\sect{N}{(h_1h_n^{-1},v)}=M_1\cdots M_n$ as needed.
\end{proof}

\subsection{The second proof}
In this proof, instead of going first to a product of conjugate subsubmonoids and then to a submonoid, we unite both steps, giving a more efficient result. First we need a geometric construction.
\begin{Proposition}\label{vectors4H5}
For any $n$, there are vectors $u_1,...u_n,v_1,...,v_{n-1}\in\Z^2$ with the following properties:
\begin{enumerate}
\item\label{item:order} $\langle u_i,v_j\rangle <0$ when $i\le j$, and $\langle u_i,v_j\rangle >0$ when $i>j$
\item\label{item:uniquesum} If $\alpha_1,...,\alpha_{n-1}\in\N_0$ satisfy $\sum_{j=1}^{n-1}\alpha_jv_j=\sum_{j=1}^{n-1} v_j$, then $\alpha_1=\alpha_2=\ldots=\alpha_{n-1}=1$.
\item\label{item:zerosum}$\sum_{i=1}^nu_i=(0,0)$
\end{enumerate}
\end{Proposition}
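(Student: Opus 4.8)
The plan is to fix the $v_j$ first, driven entirely by property~\ref{item:uniquesum}, and then to place the $u_i$ so that property~\ref{item:order} holds, using the two ``boundary'' vectors $u_1$ and $u_n$ (which carry no sign-switching constraint) as the free parameters that will also force property~\ref{item:zerosum}. Concretely, I take $v_j=(1,b_j)$ for $j=1,\dots,n-1$, where $0<b_1<\cdots<b_{n-1}$ satisfies the rapid-growth condition $b_j>jb_{j-1}$. Then property~\ref{item:uniquesum} is immediate from \Cref{prop:uniquesum} applied to the $n-1$ vectors $v_1,\dots,v_{n-1}$, and this step needs no interaction with the $u_i$ whatsoever.

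For property~\ref{item:order}, the key observation is that with $v_j=(1,b_j)$ the quantity $\inner{u_i}{v_j}=u_i^{(1)}+b_ju_i^{(2)}$ is, for fixed $i$, an affine function of $b_j$. Since the $b_j$ are increasing, requiring its sign to be $+$ for $j<i$ and $-$ for $j\ge i$ amounts to placing the single root of this affine function strictly between $b_{i-1}$ and $b_i$. For the middle indices $2\le i\le n-1$ I therefore set $u_i=(u_i^{(1)},-c_i)$ with $c_i>0$ and $b_{i-1}<u_i^{(1)}/c_i<b_i$; the rapid growth $b_i>2b_{i-1}$ guarantees this open interval contains an integer, so integrality is free (one may even take $c_i=1$). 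For $i=1$ there is no positivity constraint, so I take $u_1=(0,-c_1)$, making $\inner{u_1}{v_j}=-c_1b_j<0$ for every $j$; the parameter $c_1$ is left large and fixed only at the very end.

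Finally I set $u_n=-\sum_{i=1}^{n-1}u_i$, which gives property~\ref{item:zerosum} by construction. The only remaining conditions are the $n-1$ inequalities $\inner{u_n}{v_j}>0$, equivalently $\sum_{i=1}^{n-1}\inner{u_i}{v_j}<0$, for each $j$, and this is where I expect the real work. This sum pits the negative terms ($i\le j$) against the positive terms ($j<i\le n-1$), and its sign is not determined a priori. The resolution is that $u_1$ contributes $-c_1b_j$, a negative quantity of magnitude at least $c_1b_1>0$, whereas every other contribution is already fixed once the middle $u_i$ are chosen; hence taking $c_1$ large enough forces $\sum_{i=1}^{n-1}\inner{u_i}{v_j}<0$ simultaneously for all $j$. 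The main obstacle is thus exactly the tension between property~\ref{item:order}, which wants the sign-switching vectors $u_2,\dots,u_{n-1}$ in the lower half-plane, and property~\ref{item:zerosum}, which then forces $u_n$ far into the upper half-plane; the construction is arranged so that the unconstrained vector $u_1$ can be scaled to guarantee that $u_n$ nevertheless lands on the positive side of every $v_j$.
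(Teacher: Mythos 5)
Your proof is correct and takes essentially the same route as the paper: the same choice $v_j=(1,b_j)$ with rapid growth (so property~\ref{item:uniquesum} is the identical appeal to \Cref{prop:uniquesum}), and the same placement of the middle vectors $u_2,\dots,u_{n-1}$ in the lower half-plane with the sign switch of $\inner{u_i}{v_j}$ occurring between $b_{i-1}$ and $b_i$. The only difference is in the finish: the paper picks both boundary vectors explicitly ($u_1=(-\sum a_i,0)$ and $u_n=(0,n-2)$), making every sign condition and the zero-sum condition verifiable by inspection, whereas you define $u_n$ implicitly as $-\sum_{i=1}^{n-1}u_i$ and recover $\inner{u_n}{v_j}>0$ by scaling $c_1$ large --- a valid (and for $n=2$ even slightly more robust) variant of the same construction.
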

A useful reformulation of \cref{item:order} is: If $u_i$ appears before $v_j$ in the word $u_1v_1u_2v_2...v_{n-1}u_n$, then $\langle u_i,v_j\rangle <0$, otherwise $\langle u_i,v_j\rangle >0$. 
\begin{proof}
Choose $0<a_1<b_1<a_2<b_2<\ldots<b_{n-1}<a_n$ such that $b_i>ib_{i-1}$ for every $i$, and define $v_j=(1,b_j)$ for $1\le j\le n-1$, $u_i=(a_i,-1)$ for $1<i<n$,  and finally $u_1=(-\sum a_i,0)$, and $u_n=(0,n-2)$.  \Cref{item:uniquesum} is satisfied by \Cref{prop:uniquesum}, \cref{item:zerosum} is satisfied by the choice of $u_1,u_n$, and \cref{item:order} can be checked directly.
\end{proof}
It may be easier to understand the conditions of \Cref{vectors4H5} by rotating $u_i$ by $\pi/2$ (so $(a_i,-1)\mapsto(1,a_i)$ in the specific vectors chosen in the proof). Then, instead of the inner product $\inner{u_i}{v_j}$ we would get the orientation of $u_i,v_j$, i.e.\ the sign of the determinant of $(u_i;v_j)$. This implies that scanning the vectors counterclockwise from $u_1$ we would see them in order $u_1v_1u_2v_2...v_{n-1}u_n$. In addition, \cref{item:uniquesum} can be shown to imply that $v_j$ are all in the same half-plane, while \cref{item:zerosum} implies that $u_i$ are not all on the same half-plane. This implies that the orientation of each 2 vectors of $u_1v_1u_2v_2...v_{n-1}u_n$ conforms with their order in the word, except $u_1u_n$ which have the opposite orientation, so the angle between them in the scan must exceed $\pi$ (it is $3\pi/2$ for the chosen vectors).

\begin{Proposition}
\label{prop:main_l1l1l1}
        For any $n$ there are elements $\uu_1,..,\uu_n,\vv_1,..,\vv_{n-1}\in \heisen{5}$ with the following properties:
        \begin{enumerate}
                \item $\uu_1^l\vv_1\uu_2^l\vv_2\cdots \vv_{n-1}\uu_n^l=\vv_1\cdots\vv_{n-1}$ for any $l\in\N$
                \item In any word $w$ without inverses in $\uu_1,..,\uu_n,\vv_1,..,\vv_{n-1}$ with value $\vv_1\cdots\vv_{n-1}$, the $\uu_i$'s appear in order: the last $\uu_i$ is located before the first $\uu_{i+1}$ for every $i$.
        \end{enumerate}
\end{Proposition}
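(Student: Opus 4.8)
The plan is to reuse the vectors $u_1,\dots,u_n,v_1,\dots,v_{n-1}$ from \Cref{vectors4H5} and lift them to $\heisen{5}$, letting each $\vv_j$ play simultaneously the two roles that were separated in the first proof: the role of the conjugators $\yy^{-v_j}$ from \Cref{thm:arbitraryprod2conjprod} and the role of the ``unique product'' markers $h_j$ from \Cref{prop:uniqueprod}. Concretely I would set $\vv_j=\yy^{v_j}$ and $\uu_i=\xx^{u_i}z^{c_i}$, where the central correction is $c_i=\sum_{j<i}\inner{u_i}{v_j}$; note $c_i\ge 0$ because $\inner{u_i}{v_j}>0$ whenever $i>j$ by \cref{item:order}.

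For the first property I would compute the normal form of $\uu_1^l\vv_1\uu_2^l\vv_2\cdots\vv_{n-1}\uu_n^l$ directly in $\heisen{5}$. Its $\xx$-part is $l\sum_i u_i=0$ by \cref{item:zerosum}, its $\yy$-part is $\sum_j v_j$, and collecting all $z$'s — those from the powers of $\uu_i$ together with the cost $z^{-\inner{u_i}{v_j}}$ of commuting each $\yy^{v_j}$ past every $\xx^{u_i}$ lying to its right — gives total exponent $l\big(\sum_i c_i-\sum_{i>j}\inner{u_i}{v_j}\big)=0$ by the choice of $c_i$. Hence the word equals $\yy^{\sum_j v_j}=\vv_1\cdots\vv_{n-1}$.

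For the second property, let $w$ be any word without inverses with $\val(w)=\vv_1\cdots\vv_{n-1}$, and say $\uu_i$ occurs $\beta_i$ times and $\vv_j$ occurs $\alpha_j$ times. Projecting to $\heisen{5}/\sbgrp{z}\simeq\Z^4$, the $\yy$-coordinate reads $\sum_j\alpha_j v_j=\sum_j v_j$, so \cref{item:uniquesum} forces $\alpha_j=1$: each $\vv_j$ occurs exactly once. The crux is then a single bookkeeping computation of the $z$-exponent of $w$. Writing $p_{ij}$ (resp.\ $q_{ij}=\beta_i-p_{ij}$) for the number of occurrences of $\uu_i$ lying after (resp.\ before) the unique occurrence of $\vv_j$, the same commutation rule yields $z$-exponent $\sum_i\beta_i c_i-\sum_{i,j}p_{ij}\inner{u_i}{v_j}$; substituting $c_i=\sum_{j<i}\inner{u_i}{v_j}$ and regrouping, I expect this to collapse to
\[\sum_{i>j}q_{ij}\inner{u_i}{v_j}+\sum_{i\le j}p_{ij}\big(-\inner{u_i}{v_j}\big).\]
By \cref{item:order} every summand is a non-negative integer times a positive number, so the exponent is a sum of non-negative terms; since $\val(w)$ has trivial $z$-part, each term must vanish. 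Thus $q_{ij}=0$ for $i>j$ and $p_{ij}=0$ for $i\le j$, i.e.\ every copy of $\uu_i$ lies after each $\vv_j$ with $j<i$ and before each $\vv_j$ with $j\ge i$. In particular all copies of $\uu_i$ precede $\vv_i$ while all copies of $\uu_{i+1}$ follow $\vv_i$, which is exactly the claimed ordering.

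The only real obstacle is getting this $z$-exponent bookkeeping exactly right: tracking which $\yy^{v_j}$ sits to the left of which $\xx^{u_i}$ and keeping each commutator sign straight, so that the raw expression genuinely regroups into the displayed sum of non-negative terms. Once the signs supplied by \cref{item:order} are lined up with the ``correct'' side of each $\vv_j$ (namely $\uu_i$ before $\vv_j$ precisely when $i\le j$), the conclusion is immediate.
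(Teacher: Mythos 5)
Your proposal is correct and follows essentially the same route as the paper: the same elements $\uu_i=\xx^{u_i}z^{\sum_{j<i}\inner{u_i}{v_j}}$, $\vv_j=\yy^{v_j}$, the same appeal to \cref{item:uniquesum} to force each $\vv_j$ to occur exactly once, and the same mechanism of showing the $z$-exponent is a sum of non-negative contributions (one per occurrence of each $\uu_i$, governed by \cref{item:order}) that must all vanish. Your $p_{ij},q_{ij}$ bookkeeping is just a global reorganization of the paper's step of sweeping each occurrence of $\uu_i$ to the front of the word, and it regroups to exactly the paper's correction terms.
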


\begin{proof}
        Let $u_1,u_2,...u_n,v_1,..,v_{n-1}$ be as in \cref{vectors4H5}, and define $e_{ij}=\inner{u_i}{v_j}$.
 Define $\uu_i=\xx^{u_i}z^{\sum_{i>j}e_{ij}}$ and  $\vv_i=\yy^{v_i}$. We first show:
\begin{equation}
\label{eq:l1l1l1}
\uu^l_1\vv_1\uu^l_2\vv_2...\vv_{n-1}\uu^l_n=\vv_1\vv_2...\vv_{n-1}    
\end{equation}

To see why \Cref{eq:l1l1l1} holds, we swap each $\uu_i$ left to the beginning of the word. We have $\vv_j\uu_i=\uu_i\vv_jz^{-e_{ij}}$ and $[\uu_i,\uu_j]=1$, so during the entire journey to the beginning of the word, a factor of $z^{-\sum_{j<i}e_{ij}}$ is added for each $\uu_i$. This cancels out the $z$ power in $\uu_i=\xx^{u_i}z^{\sum_{j<i}e_{ij}}$, and we are left with $\xx^{lu_1}\cdots \xx^{lu_n}\vv_1\cdots\vv_{n-1}$. But we have $\xx^{lu_1}\cdots \xx^{lu_n}=\xx^{l\sum_iu_i}=\xx^{(0,0)}=1$ by \cref{item:zerosum}.
Conversely, assume $w$ is a word in $\uu_1,..,\uu_n,\vv_1,..,\vv_{n-1}$ with value $\vv_1\cdots\vv_{n-1}$.
By applying \Cref{prop:uniquesum} to the projection of $w$ to $\heisen{5}/\sbgrp{x_1,x_2,z}\simeq\Z^2$, we get that each $\vv_j$ appears exactly once, not necessarily in order.  We now swap each $\uu_i$ to the beginning of $w$, just like before. Since each $\vv_j$ appears once in $w$, this contributes $z^{-\sum_{j\in J}e_{ij}}$  where $J$ is the set of indices of $\vv_j$ that appear before the $\uu_i$ we are swapping. Now, by \cref{item:order} we have $\sum_{j\in J}e_{ij}\le \sum_{j<i}e_{ij}$, and equality holds iff $J=\{1,...,i-1\}$. Therefore after moving $\uu_i=\xx^{u_i}z^{\sum_{i>j}e_{ij}}$ to the beginning of the word, we are left with $\xx^{u_i}z^e$ where $e=\sum_{j<i}e_{ij}-\sum_{j\in J}e_{ij}\ge 0$,  and $e=0$ iff $\uu_i$ appears after $\vv_1,...,\vv_{i-1}$ but before $\vv_i,...,\vv_{n-1}$. After moving all $\uu_i$ for every $i$ to the beginning, we get a word of the form $\xx^u\yy^vz^e$ for some $u,v\in\Z^2,e\ge 0$. Since elements of $\heisen{5}$ have a unique such form, and since the value of $w$ is assumed to be $\yy^{\sum_jv_j}$, we get that $e=0$, and therefore each $\uu_i$ appears after $\vv_1,...,\vv_{i-1}$ but before $\vv_i,...,\vv_{n-1}$. In particular, each $\uu_i$ appears before the single occurrence of $\vv_i$, and each $\uu_{i+1}$ appears after it, so the $\uu_i$ appear in order, as required.
\end{proof}

Now we are ready for the main theorem.
\begin{Theorem}
\label{thm:main_section}
    Let $M_1,...,M_n$ be f.g.\ submonoids of any group $G$. Then, the product $M_1\cdots M_n$ is a section of some  f.g.\ submonoid of $G{\times}\heisen{5}$.
\end{Theorem}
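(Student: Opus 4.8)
The plan is to combine \Cref{prop:main_l1l1l1} with the padding trick already used in the proof of \Cref{thm:arbitraryprod2conjprod}. The elements $\uu_1,\ldots,\uu_n,\vv_1,\ldots,\vv_{n-1}\in\heisen{5}$ supplied by \Cref{prop:main_l1l1l1} play the role of separators that force the generators of the $M_i$ to appear in blocks and in the correct order, while the target section value $\vv_1\cdots\vv_{n-1}$ certifies this ordering. First I would fix finite generating sets $S_i$ for each $M_i$ with $1_G\in S_i$, and define the generating set
\[
T=\bigcup_{i=1}^n\bigl(S_i{\times}\{\uu_i\}\bigr)\cup\bigl\{(1_G,\vv_j)\mid 1\le j\le n-1\bigr\},
\]
setting $M=\sbmon{T}\le G{\times}\heisen{5}$. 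I claim $\sect{M}{\vv_1\cdots\vv_{n-1}}=M_1\cdots M_n$.

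For the forward inclusion, given $g_i=s_{i,1}\cdots s_{i,l_i}\in M_i$, I would pad each factorization with $1_G$ so that all have common length $l=\max_i l_i$; since each letter $(s_{i,j},\uu_i)$ lies in $T$, the product of the $i$-th block equals $(g_i,\uu_i^{\,l})$. Interleaving these blocks with the separators $(1_G,\vv_j)$ yields an element of $M$ whose first coordinate is $g_1\cdots g_n$ and whose second coordinate is $\uu_1^{\,l}\vv_1\uu_2^{\,l}\vv_2\cdots\vv_{n-1}\uu_n^{\,l}$, which by property 1 of \Cref{prop:main_l1l1l1} equals $\vv_1\cdots\vv_{n-1}$. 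Hence $(g_1\cdots g_n,\vv_1\cdots\vv_{n-1})\in M$, giving $M_1\cdots M_n\subseteq\sect{M}{\vv_1\cdots\vv_{n-1}}$. This direction is routine and mirrors the corresponding step in \Cref{thm:arbitraryprod2conjprod}.

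For the reverse inclusion, suppose $(g,\vv_1\cdots\vv_{n-1})\in M$, so $g$ is the first coordinate of a product of generators from $T$ whose second coordinate equals $\vv_1\cdots\vv_{n-1}$. Projecting to $\heisen{5}$, the second-coordinate word is an inverse-free word in $\uu_1,\ldots,\uu_n,\vv_1,\ldots,\vv_{n-1}$ with value $\vv_1\cdots\vv_{n-1}$, so property 2 of \Cref{prop:main_l1l1l1} applies: each $\vv_j$ occurs exactly once and the $\uu_i$ occur in order, with every occurrence of $\uu_i$ falling before the first occurrence of $\uu_{i+1}$. Reading the first coordinate, the generators contributing to $\uu_i$ come from $S_i$ and the separator letters $(1_G,\vv_j)$ contribute $1_G$; the ordering constraint forces the $S_i$-letters to be grouped into consecutive blocks in the order $i=1,\ldots,n$. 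Therefore $g$ factors as an element of $S_1^*\,S_2^*\cdots S_n^*=M_1\cdots M_n$, completing the argument.

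The main obstacle is ensuring that the ordering conclusion of property 2 genuinely translates into a block structure on the first coordinate rather than merely a weak interleaving; this is exactly why \Cref{prop:main_l1l1l1} is phrased so that the last $\uu_i$ precedes the first $\uu_{i+1}$, which rules out any $S_{i+1}$-letter appearing before an $S_i$-letter. I would make sure the write-up invokes that precise statement, since a naive "the $\uu_i$ appear in order on average" would not suffice to separate the monoid factors. Everything else—the padding by $1_G$ and the uniqueness of normal form in $\heisen{5}$—is already handled inside \Cref{prop:main_l1l1l1}, so no further geometric input beyond \Cref{vectors4H5} is needed.
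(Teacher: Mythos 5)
Your proposal is correct and follows essentially the same route as the paper's own proof: the same generating set $T$, the same padding by $1_G$ for the forward inclusion via property 1 of \Cref{prop:main_l1l1l1}, and the same projection-plus-ordering argument via property 2 for the reverse inclusion. (One small nitpick: property 2 as stated only asserts the ordering of the $\uu_i$'s, not that each $\vv_j$ occurs exactly once—that fact lives inside the proposition's proof—but your argument never actually needs it, since the separator letters contribute $1_G$ in the first coordinate.)
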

\begin{proof}
    Let $S_i\subseteq G$ be finite sets generating $M_i$, WLOG $1_G\in S_i$ for every $i$. Let $\uu_i,\vv_j$ be as in \Cref{prop:main_l1l1l1}, and define:
    \[T=\bigcup_{i\le n}(S_i\times\{\uu_i\})\cup\{(1_G,\vv_j)\mid j<n\}\]
and set $h=\vv_1\cdots\vv_{n-1}$. We claim that $\sect{\sbmon{T}}{h}=M_1\cdots M_n$.
For one side, let $g_i\in M_i$, we need to show $(g_1\cdots g_n,h)\in\sbmon{T}$. We can express $g_i=s_{i,1}s_{i,2}\cdots s_{i,l_i}$ where $l_i\in\N,s_{i,j}\in S_i$. By padding with $1_G\in S_i$ we may assume $l=l_1\cdots =l_n$. Since $(s_{i,j},\uu_i)\in T$ we get $(g_i,\uu_i^l)\in\sbmon{T}$. Therefore:
\[(g_1,\uu_1^l)(1_G,\vv_1)(g_2,\uu_2^l)(1_G,\vv_2)\cdots(g_n,\uu_n^l)=
(g_1\cdots g_n,\uu_1^l\vv_1\cdots\vv_{n-1}\uu_n^l)=(g_1\cdots g_n,h)\in\sbmon{T}\]
Where we used \Cref{prop:main_l1l1l1}. Conversely, assume $(g,h)=t_1\cdots t_l\in\sbmon{T}$. Looking at the second coordinate of $t_i$, we get by \Cref{prop:main_l1l1l1} that the $\uu_i$ appear in order, which implies that the in the first coordinate the elements of $S_i$ appear in order, so $g\in\sbmon{S_1}\cdots\sbmon{S_n}=M_1\cdots M_n$ as needed.
\end{proof}
\section{Decidability}
\begin{Theorem}
   \label{thm:reductionsH5}
   Let $G$ be any f.g.\ group.
    \begin{enumerate}
        \item Membership in a fixed product $M_1\cdots M_n$ of f.g.\ submonoids of $G$ can be reduced to membership in a fixed submonoid of $G{\times}\heisen{5}$
        \item Membership in all finite products of f.g.\ submonoids of $G$ can be reduced to the uniform submonoid membership problem in $G{\times}\heisen{5}$
    \end{enumerate}
\end{Theorem}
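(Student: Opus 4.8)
The plan is to read off both reductions directly from \Cref{thm:main_section}, which already does all the structural work. The theorem gives, for any product $M_1\cdots M_n$ of f.g.\ submonoids of $G$, an explicit finite generating set $T$ for a submonoid $\sbmon{T}\le G{\times}\heisen{5}$ and a fixed element $h=\vv_1\cdots\vv_{n-1}\in\heisen{5}$ such that $g\in M_1\cdots M_n$ if and only if $(g,h)\in\sbmon{T}$. The whole content of the present theorem is just to observe that this equivalence \emph{is} a reduction, and to track what is fixed versus what is input in each of the two parts.

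For part~(1), I would note that when the product $M_1\cdots M_n$ is fixed, the set $T$ and the element $h$ are fixed as well, computed once from the finite generating sets $S_i$ and the vectors $\uu_i,\vv_j$ of \Cref{prop:main_l1l1l1}. The reduction then takes an input $g\in G$ (given as a word in the generators of $G$), maps it to the pair $(g,h)\in G{\times}\heisen{5}$, and asks whether $(g,h)$ lies in the fixed submonoid $\sbmon{T}$. By \Cref{thm:main_section} this holds iff $g\in M_1\cdots M_n$, so the map $g\mapsto (g,h)$ is a correct many-one reduction, and it is clearly computable since $h$ is a fixed group element and the pairing is just concatenation in $G{\times}\heisen{5}$.

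For part~(2), the product $M_1\cdots M_n$ is now part of the input: we are given the number $n$ and finite generating sets $S_1,\ldots,S_n$ together with a target $g\in G$. Here I would emphasize that the construction of $T$ and $h$ from the $S_i$ is \emph{uniform}: the vectors $u_i,v_j$ of \Cref{vectors4H5} depend only on $n$ and are produced by an explicit formula, hence so are the elements $\uu_i,\vv_j\in\heisen{5}$ and the word $h$; assembling $T=\bigcup_{i\le n}(S_i\times\{\uu_i\})\cup\{(1_G,\vv_j)\mid j<n\}$ from the given $S_i$ is a finite, effective step. Thus the reduction sends the instance $(S_1,\ldots,S_n,g)$ to the instance $(T,(g,h))$ of the uniform submonoid membership problem in $G{\times}\heisen{5}$, and \Cref{thm:main_section} again guarantees that the two instances have the same answer.

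I do not expect a genuine obstacle here, since \Cref{thm:main_section} already supplies the nontrivial equivalence; the only thing requiring care is to state cleanly which data is held fixed and which is the varying input in each part, and to confirm that the passage from $(S_i)$ to $(T,h)$ is effective and uniform in $n$. The mild point worth spelling out is that in part~(2) the \emph{same} ambient group $G{\times}\heisen{5}$ serves for every $n$, so the target problem really is the single uniform submonoid membership problem in that fixed group rather than a family of problems in growing groups; this is exactly why \Cref{thm:main_section} was phrased with $\heisen{5}$ independent of $n$.
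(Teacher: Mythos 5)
Your proposal is correct and takes exactly the paper's route: the paper dispatches this theorem with the single remark that it is immediate from \Cref{thm:main_section}, and your write-up simply makes explicit the same observation, namely that the section equivalence $g\in M_1\cdots M_n \iff (g,h)\in\sbmon{T}$ is itself the reduction $g\mapsto(g,h)$, with $T,h$ fixed in part (1) and effectively computable from the input data (uniformly in $n$, within the single group $G{\times}\heisen{5}$) in part (2).
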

The proof is immediate from \Cref{thm:main_section}. We get a new, simple proof of a theorem of Romankov \cite{roman2023undecidability}:
\begin{Theorem}
    There is a group $G\in\nilptwo$ and a fixed f.g.\ submonoid $M\le G$, such that membership in $M$ is undecidable.
\end{Theorem}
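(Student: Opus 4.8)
The plan is to combine the existence of an undecidable product of submonoids in the class $\nilptwo$ with the section machinery developed above. The starting point is the result of \cite{konig2016knapsack}, which provides a group $G_0\in\nilptwo$ together with a finite collection of f.g.\ submonoids whose product $M_1\cdots M_n$ has undecidable membership problem. Since $G_0$ is itself in $\nilptwo$ and $\heisen{5}\in\nilptwo$, the group $K=G_0{\times}\heisen{5}$ is again nilpotent of class $2$ (a direct product of class-$2$ groups stays in $\nilptwo$), so $K$ is a legitimate target group for the theorem.

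First I would apply \Cref{thm:main_section} to the undecidable product $M_1\cdots M_n$ in $G_0$. This yields a single f.g.\ submonoid $M\le K=G_0{\times}\heisen{5}$ and an element $h=\vv_1\cdots\vv_{n-1}\in\heisen{5}$ such that $\sect{\sbmon{T}}{h}=M_1\cdots M_n$, i.e.\ for every $g\in G_0$ we have $g\in M_1\cdots M_n$ iff $(g,h)\in M$. Equivalently, this is the reduction recorded as part (1) of \Cref{thm:reductionsH5}: membership in the fixed product $M_1\cdots M_n$ reduces to membership in the fixed submonoid $M$ of $K$.

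The final step is the undecidability transfer. Suppose for contradiction that membership in $M$ were decidable. Given any $g\in G_0$, we could form the pair $(g,h)\in K$ and decide whether $(g,h)\in M$; by the section identity this decides whether $g\in M_1\cdots M_n$. This would give a decision procedure for membership in $M_1\cdots M_n$, contradicting the undecidability furnished by \cite{konig2016knapsack}. Hence membership in the fixed submonoid $M$ of $K=G_0{\times}\heisen{5}\in\nilptwo$ is undecidable, which is exactly the claim.

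I do not expect a genuine obstacle here, since all the hard work is already packaged in \Cref{thm:main_section} and in the cited undecidability result; the only points requiring a moment of care are verifying that $G_0{\times}\heisen{5}$ remains in $\nilptwo$ and that the map $g\mapsto(g,h)$ is a valid (computable) many-one reduction, both of which are routine. The conceptual novelty advertised in the abstract — a \emph{simple} proof — lies precisely in the fact that, once sections are available, the undecidability is inherited for free rather than re-derived from Hilbert's tenth problem.
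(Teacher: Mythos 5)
Your proposal is correct and follows essentially the same route as the paper: the paper's proof also takes the undecidable product of (four) subgroups in some $G\in\nilptwo$ from \cite{konig2016knapsack} and invokes \Cref{thm:reductionsH5} (i.e.\ \Cref{thm:main_section}) to transfer undecidability to a fixed f.g.\ submonoid of $G{\times}\heisen{5}$. Your added checks — that $G{\times}\heisen{5}$ stays in $\nilptwo$ and that $g\mapsto(g,h)$ is a computable many-one reduction — are exactly the routine details the paper leaves implicit.
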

\begin{proof}
    From \cite{konig2016knapsack}, there is a group $G\in\nilptwo$ with 4 subgroups $A_1,...,A_4$ such that membership in $A_1\cdots A_4$ is undecidable. By \Cref{thm:reductionsH5} we are done.
\end{proof}
We also get the following interesting converse to a reduction of Bodart \cite{bodart2024membership}:
\begin{Theorem}
    Let $k\ge 1$. The following problems are Turing equivalent:
    \begin{enumerate}
        \item The submonoid membership problem, uniformly for all  $G\in\nilptwo$ with $h([G,G])=k$.
        \item Deciding membership in products of submonoids, uniformly for all  $G\in\nilptwo$ with $h([G,G])=k-1$.
    \end{enumerate}
\end{Theorem}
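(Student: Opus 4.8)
The plan is to prove the two Turing reductions separately; the equivalence then follows, since both cited ingredients are uniform and effective. Throughout I use that for f.g.\ nilpotent groups $G_1,G_2$ one has $[G_1{\times}G_2,G_1{\times}G_2]=[G_1,G_1]{\times}[G_2,G_2]$, so Hirsch length is additive: $h([G_1{\times}G_2,G_1{\times}G_2])=h([G_1,G_1])+h([G_2,G_2])$. In particular $h([\heisen{5},\heisen{5}])=h(\sbgrp{z})=1$ and likewise $h([\heisen{3},\heisen{3}])=1$, and a direct product of two class-$\le 2$ nilpotent groups is again in $\nilptwo$.

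For the reduction of (2) to (1), I would feed an instance of (2)---a group $G\in\nilptwo$ with $h([G,G])=k-1$, f.g.\ submonoids $M_1,\dots,M_n$, and an element $g$---directly into \Cref{thm:main_section}. It produces a computable f.g.\ submonoid $N$ of $K=G{\times}\heisen{5}$ and a computable element $(g,h)$ such that $g\in M_1\cdots M_n$ iff $(g,h)\in N$. Since $K\in\nilptwo$ and $h([K,K])=h([G,G])+1=k$, this is exactly one instance of (1), answered by a single oracle call. Uniformity is automatic because the generating set $T$ of $N$ and the element $h$ are given explicitly in terms of the generators of the $M_i$, as in the proof of \Cref{thm:reductionsH5}.

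For the reduction of (1) to (2), I would start from an instance of (1)---a group $G\in\nilptwo$ with $h([G,G])=k$, a f.g.\ submonoid $M\le G$, and an element $g$---and apply Bodart's reduction \cite{bodart2024membership}, which reduces, via finitely many queries, membership of $g$ in $M$ to membership questions in products of f.g.\ submonoids of subgroups $H<G$ with $h([H,H])<h([G,G])=k$. Each such $H$ is f.g.\ (f.g.\ nilpotent groups are polycyclic) and lies in $\nilptwo$, but may have $h([H,H])=j$ for any $0\le j\le k-1$, whereas the oracle for (2) answers only queries with $h([\cdot,\cdot])=k-1$. To close this gap I would pad: replace $H$ by $H'=H{\times}\heisen{3}^{k-1-j}$, which lies in $\nilptwo$ and satisfies $h([H',H'])=j+(k-1-j)=k-1$, and regard each product of submonoids of $H$ as the identical product inside the factor $H{\times}\{1\}\le H'$. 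Every query of Bodart's reduction then becomes a legitimate instance of (2), and reassembling the oracle answers exactly as Bodart prescribes decides membership of $g$ in $M$.

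The entire content of the theorem is carried by these two cited reductions, so I expect no substantial obstacle: the only genuine points to verify are the Hirsch-length bookkeeping and, in particular, the padding step that converts the strict inequality $h([H,H])<k$ delivered by Bodart into the exact equality $h([\cdot,\cdot])=k-1$ demanded by (2). That padding preserves the relevant membership questions because the adjoined factor $\heisen{3}^{k-1-j}$ contributes only the identity in the coordinates where the submonoids live, and both reductions stay uniform in the input group since all constructions---the submonoid $T$ of \Cref{thm:main_section}, the direct factor $\heisen{3}^{k-1-j}$, and Bodart's reduction itself---are effective.
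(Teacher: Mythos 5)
Your proof is correct and follows essentially the same route as the paper: Bodart's reduction in one direction and \Cref{thm:main_section} (via \Cref{thm:reductionsH5}) in the other, with the Hirsch-length bookkeeping $h([G{\times}\heisen{5},G{\times}\heisen{5}])=h([G,G])+1=k$. Your padding step with $\heisen{3}^{k-1-j}$, which upgrades the strict inequality $h([H,H])<k$ delivered by Bodart's reduction to the exact equality $k-1$ demanded by problem (2), is a detail the paper's one-line proof leaves implicit, and you handle it correctly.
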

\begin{proof}
One side is given by \cite{bodart2024membership}, and for the other side, we note that if $h([G,G])=k-1$ and $K=G{\times}\heisen{5}$ then $h([K,K])=h([G,G])+1=k$.
\end{proof}
\printbibliography
\end{document}